\documentclass[10pt,reqno]{amsart}

\usepackage{amsmath} \usepackage{amssymb, amscd,cancel, graphicx,soul}
\usepackage{sseq}
\usepackage[new]{old-arrows}
	\usepackage{pinlabel,mathtools}

\usepackage{hyperref}
\hypersetup{
    colorlinks=true, 
    linktoc=all,     
    linkcolor=black,  
}

\usepackage{enumerate}

\usepackage{cleveref}
\usepackage{appendix}
\usepackage{tikz-cd}

\usepackage{mathdots}
\newtheorem{thm}{Theorem}[section]

\newtheorem{conj}[thm]{Conjecture}

\newtheorem{remark}[thm]{Remark}

\newtheorem*{ques*}{Question}
\newtheorem{defin}[thm]{Definition}

\newtheorem*{example*}{Example}
\newtheorem*{porism*}{Porism}
\newtheorem*{scholium*}{Scholium}

\newtheorem*{thm*}{Theorem}
\newtheorem*{defin*}{Definition}
\newtheorem*{lem*}{Lemma}
\newtheorem*{prop*}{Proposition}
\newtheorem*{remark*}{Remark}

\def\cC{{\mathcal C}}

\def\Kh{{\rm Kh}}

\def\Lee{{\rm Lee}}
\def\KM{{\rm KM}}

\def\bQ{{\mathbb Q}}

\def\bZ{{\mathbb Z}}

\newcommand{\ol}{\overline}

\begin{document}
\thispagestyle{empty}
\title[Khovanov concordance minima and the (4,5) torus knot.]{Khovanov concordance minima and the (4,5) torus knot.}
\author{Andrew Lobb} 
\address{Durham University,
UK.}
\email{andrew.lobb@durham.ac.uk}
\thanks{We thank Hayato Imori for explaining some of his work to us, and thank KAIST and OIST for their hospitality.}

\begin{abstract}
Ribbon concordance gives a partial order on knot types, and applying a knot homology functor to a ribbon concordance gives an inclusion of the homologies.  The question of the existence of global ribbon minima in each concordance class is a generalization of the slice-ribbon conjecture, which asserts that the unknot is the global minimum in its class.  We show that the (reduced rational) Khovanov homology of the (4,5) torus knot is a summand in the Khovanov homology of any knot in its concordance class.
\end{abstract}

\maketitle

\begin{center}
\includegraphics[width=0.2\linewidth]{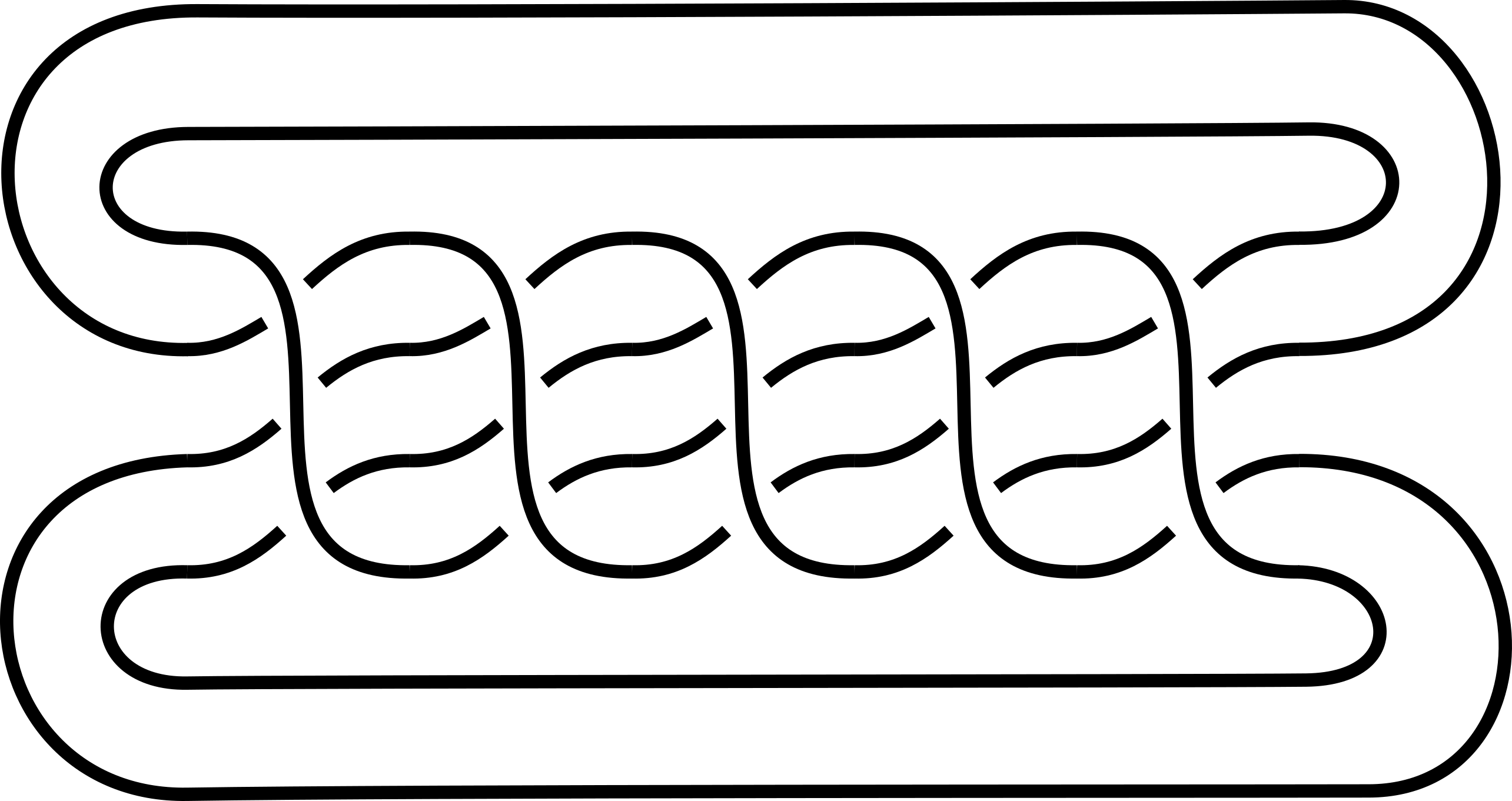}
\end{center}

\section{Introduction}
\label{sec:intro}
For oriented knots $K_0, K_1 \subset S^3$, we write $K_0 \leq K_1$ if there exists a concordance $\cC \colon K_0 \rightarrow K_1$ (that is, a smooth submanifold $S^1 \times [0,1] \cong \cC \subset S^3 \times [0,1]$ with $\partial \cC = K_0 \times \{0\} \sqcup K_1 \times \{ 1 \}$) such that the projection to $[0,1]$ is Morse with no local maxima.  We read this relation as $K_1$ has a \emph{ribbon concordance} to $K_0$. 

Agol \cite{agol} established a conjecture of Gordon's \cite{gordanribbon}: the relation $\leq$ satisfies
\[ K \leq K' \,\, {\rm and} \,\, K' \leq K \iff K = K' {\rm .} \]
Earlier, it was observed by Zemke \cite{zemke} that Heegaard Floer knot homology could obstruct ribbon concordance.  His argument generalizes in a straightforward manner to many other knot homologies (and maybe to every knot homology, suitably characterized) \cite{levinezemke,kang,daemilidmanvelavicksheawong}.  The observation is that if $\cC \colon K \rightarrow K'$ is a ribbon concordance, then one can consider the reverse concordance $\overline{\cC} \colon K' \rightarrow K$ (which may be no longer ribbon since every local minimum of $\cC$ becomes a local maximum of $\overline{\cC}$).  The composition $\ol{\cC} \circ \cC \colon K \rightarrow K$ gives a self-concordance of $K$.  The properties of many knot homologies, together with the topology of the composition $\ol{\cC} \circ \cC$, allow one to determine that the induced map
\[ H_*(\ol{\cC}) \circ H_*(\cC) = H_*(\ol{\cC} \circ \cC) \colon H_*(K) \longrightarrow H_*(K) \]
is an isomorphism (here we have written $H_*$ to stand for the reader's favorite knot homology).  Hence we conclude that $H_*(\cC)$ must include $H_*(K)$ in $H_*(K')$.  Consequently if $H_*(K)$ does not appear as a summand of $H_*(K')$ then we cannot have $K \leq K'$.

This motivates a definition.

\begin{defin}
	\label{defin} We write $K_0 \leq_{H_*} K_1$ if every concordance $\cC \colon K_0 \rightarrow K_1$ induces an inclusion as a summand $H_*(\cC) \colon H_*(K_0) \hookrightarrow H_*(K_1)$.
\end{defin}
Note that this definition depends on a choice of knot homology $H_*$.

\newpage

\section{Three conjectures and a theorem.}
\label{sec:conj}
The outstanding conjecture in ribbon concordance is the slice-ribbon conjecture:
\begin{conj}[Slice-ribbon conjecture.]
	\label{conj:sliceribbon}
	The concordance class of the unknot has a global minimum (the unknot itself) with respect to $\leq$.
\end{conj}

\noindent Extending this to other concordance classes gives a stronger conjecture:
\begin{conj}
\label{conj:globalsliceribbon}
Each concordance class has a global minimum with respect to $\leq$.
\end{conj}
\noindent And this suggests an algebraic counterpart:
\begin{conj}
	\label{conj:algsliceribbon}
	Each concordance class admits a global minimum with respect to $\leq_{H_*}$.
\end{conj}
Note that neither Conjecture \ref{conj:globalsliceribbon} nor \ref{conj:algsliceribbon} implies the other, but both imply the existence of a knot $K$ within each concordance class whose homology $H_*(K)$ appears as a summand of the homology of each knot in that class.  Broadly speaking, one could take the verification of cases of Conjecture \ref{conj:algsliceribbon} as evidence for the truth of Conjecture \ref{conj:globalsliceribbon}.  Global minima of $\leq_{H_*}$ are candidates for the unique minima of $\leq$ in their concordance class.

\begin{remark}
	\label{rem}
Suppose that $K$ is a global minimum for $\leq_{H_*}$ in its concordance class.  Then (assuming for example that $H_*$ takes values in finitely generated abelian groups or finite dimensional vector spaces) we have that any self-concordance $K \rightarrow K$ induces an isomorphism on $H_*(K)$.

Conversely, if any self-concordance $K \rightarrow K$ of a knot $K$ always induces an isomorphism on $H_*(K)$, then if $\cC \colon K \rightarrow K'$ is a concordance we see that $H_*(\ol{\cC}) \circ H_*(\cC) = H_*(\ol{\cC} \circ \cC)$ is an isomorphism.  Hence we can conclude that $K$ is a global minimum for $\leq_{H_*}$ in its concordance class.
\end{remark}

In this paper, we are particularly interested in the concordance classes of torus knots.  On the topological side, Gordon \cite{gordanribbon} proved that torus knots are local minima of $\leq$, and related results have been established recently, for example those in \cite{boningergreene,boningerribbon}.  On the algebraic side, Daemi-Scaduto \cite{daemiscaduto} proved that any self-concordance $\cC \colon K \rightarrow K$ of a torus knot $K$ induces an isomorphism on instanton knot homology (in many of its different flavors).  The class of L-space knots contains all torus knots, and Zemke \cite{zemkeb} showed that self-concordances of L-space knots also induce isomorphisms on Heegaard Floer knot homology.

Recently, Imori-Sano-Sato-Kaniguchi \cite{imori1,imori2} have shown that any 2-strand torus knot $T(2,n)$ is a global minimum in its concordance class with respect to $\leq_{\Kh_{h,t}}$ where $\Kh_{h,t}$ is any generalized Khovanov homology with coefficients in a polynomial ring over the integers.

In this paper we now turn our attention to one of the most vanilla flavors of knot homology: reduced Khovanov homology over the rationals $\bQ$, which we denote $\Kh$.  This is a finite dimensional rational vector space with a bigrading $(i,j) \in \bZ \oplus \bZ$: the homological grading $i \in \bZ$ and the quantum grading $j \in \bZ$.  If $U$ is the unknot then we take the normalization so that $\Kh(U)$ is $1$-dimensional and supported in bigrading $(0,0)$.

\begin{thm}
	\label{thm:t45}
	The $(4,5)$ torus knot $T$ is a global minimum for $\leq_{\Kh}$ in its concordance class.
\end{thm}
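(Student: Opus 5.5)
By Remark \ref{rem}, it suffices to show that every self-concordance $\cC \colon T \rightarrow T$ induces an isomorphism $\Kh(\cC) \colon \Kh(T) \rightarrow \Kh(T)$. Since $\Kh(T)$ is finite dimensional, and $\Kh(\cC)$ preserves the bigrading because a concordance has Euler characteristic zero, it is enough to check that $\Kh(\cC)$ is injective. We can check this bigrading by bigrading, or via a filtration argument.

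First we write down $\Kh(T(4,5))$ explicitly. The reduced rational homology of $T(4,5)$ is small and thin in a few places only. We will organize it using the Lee (or more generally Bar-Natan / $\mathbb{Q}[h]$) deformation. In the reduced $\bQ[h]$-equivariant theory, $\Kh_h(T)$ decomposes as one free $\bQ[h]$-summand, generated in homological degree $0$ and quantum degree $s(T)=2g_4=12$, together with $h$-torsion summands $\bQ[h]/(h^{k})$. Each torsion summand pairs generators of $\Kh(T)$ via a differential of length $k$ in the Bar-Natan spectral sequence.

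The key structural input is that a concordance induces a $\bQ[h]$-module map of degree $(0,0)$ on $\Kh_h$ that is an isomorphism after inverting $h$, by Rasmussen/Lee-type arguments. This forces the free generator to map to a unit multiple of itself modulo torsion. The task is then to show that the map is also injective on the torsion part. For this we use the naturality of $\Kh_h(\cC)$ as a chain map up to homotopy, together with grading constraints. Any torsion generator in bidegree $(i,j)$ must map to something in the same bidegree with torsion order at most its own. For $T(4,5)$ we check degree by degree that each relevant bidegree is at most one-dimensional in each torsion type.

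It then remains to show that each torsion class cannot map to zero. The idea is to exploit the module structure: a torsion class $x$ with $h^{k-1}x\neq 0$ is often related to the free part via the action of $X$ (the reduced/unreduced module structure over $\bQ[X]/(X^2)$) or via a connecting map. We would pass to the unreduced theory, where the basepoint action of $X$ is natural under concordances that carry a marked arc. In that theory, $X$ times appropriate classes lands in the image of the free part, so nonvanishing propagates from the free generator.

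The main obstacle is this last step: producing, for each torsion generator of $\Kh(T(4,5))$, a natural operation linking it to the free generator. Grading arguments alone will not rule out a torsion generator mapping to zero. If the $X$-action and $h$-action do not suffice, I would try the Khovanov–Rozansky $\mathfrak{sl}_N$ deformations or a second filtration (e.g.\ over $\mathbb{F}_2$ with $\mathrm{Sq}^1$-type or $\bZ$-torsion information). As a final alternative, I would import the Daemi–Scaduto instanton result through Kronheimer–Mrowka's spectral sequence to pin down the remaining summands.
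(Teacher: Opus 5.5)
\textbf{There is a genuine gap.} Your reduction via Remark~\ref{rem} is fine, and so is your use of the $\bQ[h]$ (Lee/Bar-Natan) structure to control the free summand. But the proposal stops exactly where the content lies: you never show that a torsion generator of $\Kh(T)$ is not sent to zero.

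\textbf{Why your proposed mechanism fails.} The mechanism you suggest cannot work for grading reasons. The basepoint action of $X$ has homological degree $0$. The free part of the deformed theory, reduced or unreduced, sits entirely in homological degree $0$. Every other generator of $\Kh(T)$ lies in homological degrees $2$ through $9$. So no such operation links a torsion class to the free generator.

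\textbf{Why Khovanov--Lee structure alone is not enough.} Naturality of the $\bQ[h]$-module map (equivalently, $E^{\Lee}_*(\cC)$ being a spectral sequence morphism with $E_2$-term $\Kh(\cC)$) gives only a conditional statement. For each of the four Lee pairs it says $\Kh(\cC)$ is an isomorphism at one end if and only if it is at the other. In the coordinates of the paper's diagrams these pairs are
\[ (2,14)\to(3,15), \quad (4,14)\to(5,17), \quad (6,14)\to(7,17), \quad (8,16)\to(9,17). \]
A degree-preserving module endomorphism that is the identity on the free summand and zero on all torsion is compatible with all of this structure. You therefore need an independent input forcing at least one end of each pair to be mapped isomorphically.

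\textbf{The missing input.} That input is the paper's actual argument, which you list only as a last resort and do not carry out.
\begin{enumerate}
\item The Kronheimer--Mrowka spectral sequence $E^{\KM}_*(T)$, from $\Kh(T)$ to $I^\natural(T)$, has exactly one nonzero differential after $E_2$. It has rank $1$ and goes from $(2,14)$ or $(4,14)$ to $(9,17)$.
\item By Daemi--Scaduto, $\cC$ induces an isomorphism on $I^\natural(T)$, hence on its associated graded.
\item So the six generators outside $(2,14)$, $(4,14)$, $(9,17)$ survive to $E_\infty$, and the map is an isomorphism on them.
\item Each Lee pair contains one of these six generators, so the Lee naturality argument finishes the proof.
\end{enumerate}

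\textbf{The Reidemeister-move subtlety.} To make step 3 legitimate you must also handle the fact that $E_2^{\KM}(\cC)$ is only known to agree with $\Kh(\cC)$ for movies without Reidemeister moves. The paper arranges the movie as births, then Reidemeister moves, then saddles, then Reidemeister moves, then deaths. It then uses that $\Kh(T)$ has rank at most one in each homological degree. Hence, on the relevant pieces (spanned by $v\otimes v_+^{\otimes k}$, respectively $v\otimes v_-^{\otimes l}$), the unknown Reidemeister isomorphisms are nonzero scalars on one-dimensional spaces. Consequently the ranks of $E_2^{\KM}(\cC)$ and $\Kh(\cC)$ agree bigrading by bigrading.

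Without these two ingredients the proposal does not prove the theorem.
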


This theorem is the result of playing spectral sequences starting at $\Kh(T)$ off against each other, together with using cobordisms between $T$ and simple knots to constrain the differentials of these spectral sequences (the work finding cobordisms was carried out in \cite{LobbZentss}).  We believe that these techniques should be sufficient to establish Conjecture \ref{conj:algsliceribbon} for other classes of knots and for various other less vanilla flavors of Khovanov homology.

\section{Two spectral sequences and the (4,5) torus knot.}
\label{sec:ss}

Khovanov homology $\Kh$ appears as an early page of many spectral sequences, extensively studied in the literature.  One early example is the reduced version of the Lee spectral sequence $E_*^{\Lee}$ in which $E_2^{\Lee}(K) = \Kh(K)$ and $E_\infty^{\Lee}(K)$ is $1$-dimensional and supported in bigrading $(0,s(K))$ where $s(K) \in 2\bZ$ is the Rasmussen invariant \cite{lee,ras3}.

A more recent example (or, more precisely, many examples for different choices of filtration) is given by Kronheimer-Mrowka $E^\KM_*$ \cite{KrMrunknot,KrMrfilt}.  For this spectral sequence, $E_2^\KM(K) = \Kh(K)$ (we allow ourselves to shift the ordering of the pages so that the second page is the Khovanov page irrespective of the filtration chosen) and $E_\infty^\KM(K)$ is the reduced instanton knot homology $I^\natural(K)$ (or more precisely the associated graded vector space to the filtration on $I^\natural(K)$).

We now discuss some of what is known about these two spectral sequences.  The diagrams below each represent $\Kh(T)$, which has total dimension $9$.  A bullet represents a copy of $\bQ$.  The horizontal coordinate gives the homological grading $i$, while the $y$ coordinate gives the coordinate $j-i$ where $j$ is the quantum grading.  

\begin{minipage}{0.34\textwidth}
\begin{sseq}[ylabelstep=1,ylabels={12;;14;;16;}]{0...9}{0...5}
	\ssdropbull
	\ssmove 2 2
	\ssdropbull \ssname{a}
	\ssmove 1 1
	\ssdropbull
	\ssmove 2 2
	\ssdropbull
	\ssmove 2 0
	\ssdropbull
	\ssmove 2 0
	\ssdropbull \ssname{c}
	\ssmove {-1} {-1}
	\ssdropbull
	\ssmove {-2} {-2}
	\ssdropbull
	\ssmove {-2} 0
	\ssdropbull \ssname{b}
	\ssgoto a \ssgoto c \ssstroke \ssarrowhead
	\ssgoto b \ssgoto c \ssstroke \ssarrowhead
\end{sseq}
\end{minipage}
\begin{minipage}{0.6\textwidth}
The spectral sequence $E_*^\KM(T)$ has exactly one non-trivial differential after the Khovanov page $E_2^\KM(T) = \Kh(T)$, and this differential is of rank $1$ (see Section 11 of \cite{KrMrfilt}).  We indicate the two possible differentials (see Subsection 4.1 of \cite{LobbZentss}).
\end{minipage}

\begin{minipage}{0.34\textwidth}
	\begin{sseq}[ylabelstep=1,ylabels={12;;14;;16;}]{0...9}{0...5}
		\ssdropbull
		\ssmove 2 2
		\ssdropbull \ssname{a}
		\ssmove 1 1
		\ssdropbull \ssname{e}
		\ssmove 2 2
		\ssdropbull \ssname{f}
		\ssmove 2 0
		\ssdropbull \ssname{g}
		\ssmove 2 0
		\ssdropbull \ssname{h}
		\ssmove {-1} {-1}
		\ssdropbull \ssname{d}
		\ssmove {-2} {-2}
		\ssdropbull \ssname{c}
		\ssmove {-2} 0
		\ssdropbull \ssname{b}
		\ssgoto a \ssgoto e \ssstroke \ssarrowhead
		\ssgoto b \ssgoto f \ssstroke \ssarrowhead
		\ssgoto c \ssgoto g \ssstroke \ssarrowhead
		\ssgoto d \ssgoto h \ssstroke \ssarrowhead
	\end{sseq}
\end{minipage}
\begin{minipage}{0.6\textwidth}
	The spectral sequence $E_*^\Lee(T)$ converges to $E_\infty^\Lee(T)$, which is $1$-dimensional and supported in bigrading $(0,12)$.  The differentials after $E_2^\Lee(T) = \Kh(T)$ are determined by the fact that each has homological grading $1$.
\end{minipage}

If $\Sigma \colon K_0 \rightarrow K_1$ is an oriented knot cobordism with a smooth choice of basepoint for each link slice, then presenting $\Sigma$ as a movie $M$ between diagrams $D_0$ and $D_1$ we obtain spectral sequence morphisms:
\[ E_*^\KM(M) \colon E_*^\KM(D_0) \longrightarrow E_*^\KM(D_1) \,\,\, {\rm and} \,\,\, E_*^\Lee(M) \colon E_*^\Lee(D_0) \longrightarrow E_*^\Lee(D_1) {\rm .} \]
For the Lee spectral sequence, the map on $E_2^\Lee = \Kh$ agrees with the standard map induced by $M$ on Khovanov homology.  For the Kronheimer-Mrowka spectral sequence the map on $E_2^\KM = \Kh$ is only known to agree with the standard Khovanov map for movies not involving Reidemeister moves (although see \cite{imori1,imori2} for work in the non-reduced case).

We now let $\cC \colon T \rightarrow T$ be any self-concordance of $T$.  By Remark \ref{rem}, to establish Theorem \ref{thm:t45} we wish to see that $\cC$ induces an isomorphism on $\Kh(T)$.
In order to sidestep the Reidemeister question between $E_2^\KM$ and $\Kh$, we observe that the concordance $\cC$ can be represented as a basepointed movie between two diagrams $D$ and $D'$ of $T$, with moves in the following order:

\begin{itemize}
	\item $k$ $0$-handle attachments,
	\item Reidemeister moves,
	\item $(k+l)$ $1$-handle attachments,
	\item Reidemeister moves,
	\item $l$ $2$-handle attachments.
\end{itemize}

\newpage

\section{Khovanov homology and the (4,5) torus knot.}
\label{sec:45}

\begin{proof}[Proof of Theorem \ref{thm:t45}.]
The Kronheimer-Mrowka construction associates to this movie of $\cC \colon T \rightarrow T$ a morphism of spectral sequences, which decomposes as the composition of five morphisms, one for each of the bullet points above.  We write the decomposition of the movie as
\[ D = D_1 \longrightarrow D_2 \longrightarrow D_3 \longrightarrow D_4 \longrightarrow D_5 \longrightarrow D_6 = D' {\rm .} \]

On the Khovanov page $E_2^\KM$, the first map gives the inclusion
\[ \Kh(T) \cong \Kh(D=D_1) \longhookrightarrow \Kh(D_1 \sqcup U_k = D_2) = \Kh(D) \otimes \langle v_+, v_- \rangle^{\times k} \colon v \longmapsto v \otimes v_+^{\otimes k} \]
where we write $U_k$ for the $k$-component unlink.

The second map gives an isomorphism of bigrading $(0,0)$
\[ \Kh(D_2)\longrightarrow \Kh(D_3) {\rm .} \]
It is not known if this map coincides with the map as it is usually defined on Khovanov homology.  However, in each bigrading supporting the image of the first map, this is just an isomorphism of a $1$-dimensional vector space, since $Kh(T)$ has dimension at most $1$ in each homological grading.

The third map gives a linear map of bigrading $(0,-k-l)$
\[ \Kh(T) \otimes \langle v_+, v_- \rangle^{\otimes k} \cong \Kh(D_3) \longrightarrow Kh(D_4) \cong \Kh(T) \otimes \langle v_+, v_- \rangle^{\otimes l} {\rm ,} \]
which does coincide with the Khovanov map.  Note that this maps the image of the first map into the summand $\Kh(T) \otimes \langle v_- \rangle^{\otimes l}$.

The fourth map gives an isomorphism of bigrading $(0,0)$
\[ \Kh(D_4) \longrightarrow \Kh(D_5) {\rm ,} \]
and it is not known if this agrees with the usual Khovanov map.  We note again that, in each bigrading supporting $\Kh(T) \otimes \langle v_- \rangle^{\otimes l}$, this is an isomorphism of a $1$-dimensional vector space.

Finally, the fifth map is the projection
\[ \Kh(D_5 = D_6 \sqcup U_l) = \Kh(D_6) \otimes \langle v_+, v_- \rangle^{\otimes l} \longrightarrow \Kh(D_6 = D') \colon v \otimes v_-^{\otimes l} \longmapsto v {\rm .} \]

So we have observed that the map
\[ E_2^\KM(\cC) \colon \Kh(D) = E_2^\KM(D) \longrightarrow E_2^\KM(D') = \Kh(D')\]
in each non-zero bigrading corresponds to a composition of the Khovanov maps of $1$-dimensional vector spaces, together with isomorphisms of $1$-dimensional vector spaces.  It follows that the rank of this map agrees with the rank of the Khovanov map in each bigrading.

Since $E_\infty^\KM(\cC)$ is the map induced by $\cC$ on the (associated graded) instanton homology $I^\natural(T)$, and we know that this map is an isomorphism, it follows that $\Kh(\cC)$ is an isomorphism in all bigradings apart from possibly $(2,14)$, $(4,14)$, and $(9,17)$.

On the other hand, the generators in these gradings are each either the source or the target of a non-trivial differential in the spectral sequence $E_*^\Lee(T)$.  The other end of each differential is preserved under the Khovanov map.  Since $E_*^\Lee(\cC)$ gives a morphism of spectral sequences which is the Khovanov map on $E_2^\Lee(T) = \Kh(T)$, we conclude that $\Kh(\cC)$ is also an isomorphism in each bigrading $(2,14)$, $(4,14)$, and $(9,17)$.
\end{proof}

\bibliographystyle{amsplain}
\bibliography{References./works-cited.bib}

\providecommand{\bysame}{\leavevmode\hbox to3em{\hrulefill}\thinspace}
\providecommand{\MR}{\relax\ifhmode\unskip\space\fi MR }
\providecommand{\MRhref}[2]{%
  \href{http://www.ams.org/mathscinet-getitem?mr=#1}{#2}
}
\providecommand{\href}[2]{#2}
\begin{thebibliography}{10}

\bibitem{agol}
Ian Agol, \emph{Ribbon concordance of knots is a partial ordering}, Commun. Am.
  Math. Soc. \textbf{2} (2022), 374--379. \MR{4520779}

\bibitem{boningerribbon}
Joe Boninger, \emph{Positive knots and ribbon concordance}, Pacific J. Math.
  \textbf{335} (2025), no.~1, 81--95. \MR{4891413}

\bibitem{boningergreene}
Joe Boninger and Joshua~Evan Greene, \emph{Special alternating knots are band
  prime}, Int. Math. Res. Not. IMRN (2024), no.~10, 8758--8763. \MR{4749185}

\bibitem{daemilidmanvelavicksheawong}
Aliakbar Daemi, Tye Lidman, David~Shea Vela-Vick, and C.-M.~Michael Wong,
  \emph{Ribbon homology cobordisms}, Adv. Math. \textbf{408} (2022), Paper No.
  108580, 68. \MR{4467148}

\bibitem{daemiscaduto}
Aliakbar Daemi and Christopher Scaduto, \emph{Chern-{S}imons functional,
  singular instantons, and the four-dimensional clasp number}, J. Eur. Math.
  Soc. (JEMS) \textbf{26} (2024), no.~6, 2127--2190. \MR{4742808}

\bibitem{gordanribbon}
C.~McA. Gordon, \emph{Ribbon concordance of knots in the {$3$}-sphere}, Math.
  Ann. \textbf{257} (1981), no.~2, 157--170. \MR{634459}

\bibitem{imori1}
Hayato Imori, Taketo Sano, Kouki Sato, and Masaki Taniguchi, \emph{Cobordism
  maps in singular instanton homology and {K}hovanov homology {I}},
  journal={{\tt arXiv:2505.12095v1}}.

\bibitem{imori2}
\bysame, \emph{Cobordism maps in singular instanton homology and {K}hovanov
  homology {I}{I}}, journal={{\tt arXiv:2510.09399v2}}.

\bibitem{kang}
Sungkyung Kang, \emph{Link homology theories and ribbon concordances}, Quantum
  Topol. \textbf{13} (2022), no.~1, 183--205. \MR{4404800}

\bibitem{KrMrunknot}
P.~B. Kronheimer and T.~S. Mrowka, \emph{Khovanov homology is an
  unknot-detector}, Publ. Math. Inst. Hautes \'Etudes Sci. (2011), no.~113,
  97--208. \MR{2805599}

\bibitem{KrMrfilt}
Peter Kronheimer and Tomasz Mrowka, \emph{Filtrations on instanton homology},
  Quantum Topol. \textbf{5} (2014), no.~1, 61--97. \MR{3176310}

\bibitem{lee}
E.S. Lee, \emph{{An endomorphism of the {K}hovanov invariant}}, Adv. Math.
  \textbf{197} (2002), no.~2, 554--586. \MR{2173845 (2006g:57024)}

\bibitem{levinezemke}
Adam~Simon Levine and Ian Zemke, \emph{Khovanov homology and ribbon
  concordances}, Bull. Lond. Math. Soc. \textbf{51} (2019), no.~6, 1099--1103.
  \MR{4041014}

\bibitem{LobbZentss}
Andrew Lobb and Raphael Zentner, \emph{On spectral sequences from {K}hovanov
  homology}, Algebr. Geom. Topol. \textbf{20} (2020), no.~2, 531--564.
  \MR{4092306}

\bibitem{ras3}
J.~Rasmussen, \emph{Khovanov homology and the slice genus}, Invent. Math.
  \textbf{182} (2010), 419--447.

\bibitem{zemke}
Ian Zemke, \emph{Knot {F}loer homology obstructs ribbon concordance}, Ann. of
  Math. (2) \textbf{190} (2019), no.~3, 931--947. \MR{4024565}

\bibitem{zemkeb}
\bysame, \emph{Link cobordisms and absolute gradings on link {F}loer homology},
  Quantum Topol. \textbf{10} (2019), no.~2, 207--323. \MR{3950650}

\end{thebibliography}
%
\end{document}